\def\az{\alpha}
\def\dist{{\mathop\mathrm{\,dist\,}}}
\def\ez{\epsilon}
\def\gz{{\gamma}}
\def\bint{{\ifinner\rlap{\bf\kern.35em--}
\int\else\rlap{\bf\kern.45em--}\int\fi}\ignorespaces}
\def\bbint{{\ifinner\rlap{\bf\kern.35em--}
\hspace{0.078cm}\int\else\rlap{\bf\kern.45em--}\int\fi}\ignorespaces}
\def\diam{{\mathop\mathrm{\,diam\,}}}
\newcommand{\R}{\mathbb{R}}
\newtheorem{thm}{Theorem}[section]
\newtheorem{lem}[thm]{Lemma}
\newtheorem{prop}[thm]{Proposition}
\newtheorem{defn}[thm]{Definition}
\numberwithin{equation}{section}
\theoremstyle{remark}
\newtheorem{rem}[thm]{Remark}
\def\bint{{\ifinner\rlap{\bf\kern.35em--}
\int\else\rlap{\bf\kern.45em--}\int\fi}\ignorespaces}
\newcommand{%
	
	\import{./}{.pdf_tex}
}[1]{%
	
	\import{./}{#1.pdf_tex}
}
\title[Sobolev trace inequalities on John domains and its applications]{Sobolev trace inequalities on John domains and its applications}
\author{Weicong Su and Yi Ru-Ya Zhang}
\date{\today}
\address{State Key Laboratory of Mathematical Sciences, Academy of Mathematics and Systems Science, Chinese Academy of Sciences, Beijing 100190, China}
\address{Academy of Mathematics and Systems Science, the Chinese Academy of Sciences, Beijing 100190, P. R. China}
\email{suweicong@amss.ac.cn}  
\email{yzhang@amss.ac.cn}
\thanks{Both of the authors are funded by the National Key R\&D Program of China (Grant No. 2025YFA1018400 \&  No. 2021YFA1003100), NSFC Grant No. 12288201 \& No. 12571128, the Chinese Academy of Sciences, and CAS Project for Young Scientists in Basic Research, Grant No. YSBR-031.
}
\subjclass[2000]{46E35}
\keywords{John domains, Wulff inequality, trace inequality}
\begin{document}

\begin{abstract}
We prove that a trace inequality holds for John domains $\Omega$ satisfying
$$ \mathcal H^{n-1}(\partial \Omega\setminus \partial_*\Omega)=0,$$ 
where $\partial_*\Omega$ denotes the measure-theoretic boundary, together with  an upper density bound on $\partial \Omega$. This class of domains includes  $(\ez,\,r)$-perimeter minimizers of Wulff perimeter $P_K$ which are close to the associated convex body $K$. Particularly, this result is established without requiring $\partial \Omega$ to be Ahlfors regular. 

As a consequence, we give an alternative proof for a crucial step in the quantitative Wulff inequality, thereby providing a meaningful commentary on the seminal work of Figalli, Maggi, and Pratelli \cite{FMP2010}. 
\end{abstract}


\maketitle
\section{Introduction}

 Let $K \subset \mathbb{R}^n, n\ge 2$ be a convex (open) set containing the origin such that $|K| = |B|$.
Here $B$ denotes the standard Euclidean unit ball and  $|\cdot|$ denotes the Lebesgue measure.  We denote by $\mathcal H^{k}$ the $k$-dimensional Hausdorff measure, and define
$$P_K(E)=\int_{\partial^*E} \|\nu_E(x)\|_{*}\, d\mathcal H^{n-1}(x)$$
where $\|\cdot\|_{*}: \mathbb{R}^{n} \to \mathbb{R}_+$ is defined as $$\|y\|_{*}:=\sup_{x\in K}x\cdot y  \ \text{ for any } y\in \mathbb R^n,$$
which is convex, positive, and  $1$-homogeneous. Here, $E$ represents a set of finite perimeter, $\partial^*E$ denotes its reduced boundary, and $\nu_E$ refers to the (measure-theoretic) unit outer normal; see the beginning of SubSection~\ref{sec:minimizer john} for more specific definitions. We further assume that 
$$\int_K x=0.$$
This assumption is not restrictive for our problem; see Remark \ref{affine invar} below for a detailed explanation.

We usually refer to 
 $P_K(E)$ as the \emph{Wulff perimeter} of $E$, and $K$ as the \emph{Wulff shape} corresponding to the surface tension 
 $\|\cdot\|_{*}$. 
The (open) set $K$ can be characterized via $\|\cdot\|_{*}$ by
$$K=\bigcap_{\nu\in \mathbb S^{n-1}} \left\{x\in \mathbb R^n\colon x\cdot \nu < \|\nu\|_{*} \right\}.$$
In particular, when $K=B$, or equivalently $\|\cdot\|_{*}\equiv 1$ on $\mathbb S^{n-1}$,  one obtains the standard Euclidean norm and denotes by 
$$P(E):=P_B(E)=\mathcal{H}^{n-1}(\partial^* E)$$
the perimeter of $E$ with respect to the Euclidean norm. 

Analogous to the standard Euclidean case, we have the Wulff inequality: For any set of finite perimeter  $E\subset\mathbb R^n$, 
$$P_K(E)\ge n |K|^{\frac {1} n }|E|^{\frac {n-1}n}.  $$
Moreover, the equality holds if and only if $E$ is congruent to $K$ up to translation and dilation.

In their distinguished work \cite{FMP2010}, Figalli, Maggi, and Pratelli employed a mass transportation approach to demonstrate a quantified version of the Wulff inequality:  For every set  of finite perimeter $E\subset\mathbb R^n$ with $|K|=|E|=|B|$, one has 
\begin{equation}\label{quantitative wulff}
P_K(E) - P_K(K)\ge c(n) \min_{x\in \mathbb R^n} |E\Delta (x+K)|^2,
\end{equation} 
where the constant is independent of $K$. 
Their result holds significant implications for exploring the geometric configurations of small liquid droplets and crystalline structures; see e.g. \cite{FM2011, FZ2022}.

\begin{rem}\label{affine invar}
   For any affine map $L:\mathbb R^n\to \mathbb R^n$, \eqref{quantitative wulff} is invariant under affine maps according to \cite[Step 2, Proof of Theorem 1.1]{FMP2010}. Hence, up to a translation, we may assume the barycenter of $K$ is the origin. 

   Moreover, recall  John's lemma  \cite[Theorem III]{J1948} yields that, for any convex set $K\subset\mathbb R^n$, there exists an affine map $L_0$ on $\mathbb R^n$ so that
  $$B_1\subset L_0(K)\subset B_n, \quad \det L_0>0. $$
Thus, we also assume that
\begin{equation}\label{mk control}
    1\le  \frac{M_K}{m_K}\le n.
\end{equation}
where $M_k$ and $m_k$ are defined as 
\begin{equation}\label{defi mk}
   m_K:=\inf\{\|v\|_*: v\in \mathbb S^{n-1}\}\quad \text{and} \quad M_K:=\sup\{\|v\|_*: v\in \mathbb S^{n-1}\}.
\end{equation}
\end{rem}

\medskip

An indispensable step in their proof,  presented in \cite[Theorem 3.4]{FMP2010}, requires identifying, for any set $E\subset \mathbb R^n$ close to the Wulff shape $K$ with small isoperimetric deficit
$$\delta(E):=\frac{P_K(E)}{n|K|^{\frac{1}{n}}|E|^{\frac{n-1}{n}}}-1\ll 1,$$
a corresponding set $F\subset \mathbb R^n$. This set $F$, while is close to $E$, supports either a Sobolev-Poincar\'e inequality of the form:
\begin{equation}\label{SP inequ}
\int_{E} \|-Du(x)\|_* \,dx \ge c(n) \inf_{a\in \mathbb R} \left(\int_{E} |u(x)-a|^{\frac n{n-1}}\, dx\right)^{\frac {n-1} {n}} \quad \text{ for all } \ u\in C_c^1(\mathbb R^n),
\end{equation}
wherein one  can derive a variant of \eqref{quantitative wulff} for $F$ with a non-optimal exponent of $4$ on the right-hand side; or a trace inequality of the form
\begin{equation}\label{1 trace inequ}
\int_{E} \|-Du(x)\|_* \,dx \ge c(n) \inf_{a\in \mathbb R} \int_{\partial E} |u(x)-a|\|\nu_E(x)\|_*\, d\mathcal H^{n-1}(x) \quad \text{ for all } \ u\in C_c^1(\mathbb R^n),
\end{equation}
wherein one can precisely establish the inequality \eqref{quantitative wulff} for $F$ with the optimal exponent of 2; see \cite[Section 1.6]{FMP2010} for more discussions.
We emphasize that the constants in \eqref{SP inequ} and \eqref{1 trace inequ} are independent of $K$ once the ratio $\frac{M_k}{m_k}$ is bounded uniformly. 
The  construction of $F$ in \cite{FMP2010} is well-executed, utilizing a 'maximal critical set' to derive an explicit constant in the trace inequality.

On the other hand, a sharp quantitative isoperimetric inequality for the standard Euclidean norm was proven by Fusco, Maggi, and Pratelli \cite{FMP2008} through the quantification of Steiner symmetrization. Later, Cicalese and Leonardi provided an alternative argument in \cite{CL2012} (see also \cite{AFM2013}), based on the regularity of minimal surfaces. In their approach, they also needed to slightly modify a set   $E\subset \mathbb R^n$ that is close to the unit ball $B$,  in order to obtain a set $F$ which is $W^{1,\,\infty}$-close to $B$.  To achieve this, they first established a penalized variational problem based on $E$ to obtain the set $F$, and then apply the regularity results for $(\ez,\,r)$-minimizers (see Definition~\ref{ez r} below) to show that $F$ is even $C^{1,\,\az}$-close to $B$ for some $0<\az<\frac 1 2$. 
This method is now known as the \emph{selection principle} and has found wide application; see, for example, \cite{BDV2015, FZ2017, FZ2022}.

However, a direct application of the regularity method, namely the selection principle, in the argument presented in \cite{FMP2010} encounters a challenge: the  Lipschitz regularity (uniformly under appropriate normalization) for $(\ez,r)$-minimizers with respect to $P_K$ still poses an unresolved problem when $n\ge 3$; we draw attention to \cite{N2016} for a related endeavor in this direction. Indeed, the shape $K$ itself may lack $C^{1,\,\alpha}$ regularity and uniform convexity, thereby complicating prospects for enhanced regularity.  To our knowledge, the most notable advancement in this regard can be found in \cite{AP1999} and \cite[Proposition 4.6]{ANP2002}, where a Lipschitz approximation property was demonstrated for $(\ez,r)$-minimizers of $P_K$ associated with a convex set $K$ of arbitrary form. One may consult the references therein for further literature on this topic.

\subsection{Literature: Sobolev-Poincar\'e meets John}
Nevertheless, Lipschitz regularity is not fully necessary to support \eqref{SP inequ} (or \eqref{1 trace inequ}). Indeed, there have been numerous studies on the geometric characterizations of domains that support the Sobolev-Poincar\'e inequality. In particular, it was shown in \cite{B1988} and \cite{GR1983} that a John domain admits a $(p^*,\,p)$-Sobolev-Poincar\'e inequality with the same possible exponents $p^*$ as the one for a ball, where $1\le p<n$ and $p^*=\frac{np}{n-p}$. Later, Buckely and  Koskela \cite{BK1995} also proved that the John property is necessary under some mild geometric assumptions on the domain. 
This relation was also generalized to  other metric measure spaces including Carnot groups; see e.g. \cite{HK2000} for a comprehensive study.

However, it appears to us that there is currently no direct discussion on the relationship between the trace inequality and John domains until now.
To be specific, let us introduce  the concept of John domains.
\begin{defn}
    For $J\ge 1$, a (bounded) domain $\Omega\subset \mathbb R^n$ is said to be  $J$-John  provided that, there exists a distinguished point $x_0\in \Omega$ so that, for any $x\in \Omega$, there exists a curve $\gamma\subset \Omega$ starting from $x$ ending at $x_0$ satisfying the following condition: 
\begin{equation}\label{John curve}
\ell(\gamma[x,\,y])\le J\dist(y,\,\partial \Omega) \quad \text{ for any $y\in \gamma$}, 
\end{equation}
where $\ell(\gamma[x,\,y])$ denotes the Euclidean length of the subcurve of $\gamma$ joining $x$ to $y$. 
We usually call $x_0$ the  {John center} of $\Omega$ and $\gamma$ the  {John curve} joining $x_0$ and $x$. 

In addition, a domain is $(J,\,s)$-John if there exists $s>0$ and $J\ge 1$ so that, whenever $z\in \partial E$ and $0<r<s$, for all points $x\in B_r(z)\cap \Omega$, there exists a point $w_x\in B_{Jr}(z)\cap \Omega$ with 
$$\dist(w_x,\,\partial \Omega)\ge J^{-1}r$$
and a curve $\gamma\subset \Omega$ joining $x$ to $w_x$ so that \eqref{John curve} is satisfied for any $y\in \gamma$.
\end{defn}

 It is worthy to note that $(J,s)$-John domain might not be John as the center $w_x$ depends on $x$. However, when the boundary of a $(J,s)$-John domain $\Omega$ is sufficiently close to that of a John domain in the Hausdorff distance, one can prove $\Omega$ is also a John domain, quantitatively; see Lemma \ref{John minimizer} below.

\subsection{Trace inequality} 

In order to attain the precise exponent  $2$, we establish the validity of \eqref{1 trace inequ} for functions in the space 
$$ W^{1,\,1}(\Omega):=\{u\in L^1(\Omega): |D u|\in L^1(\Omega)\}$$
within a John domain $\Omega$, subject to certain supplementary conditions.

For a Borel set $E$ and 
$x\in E$, we denote by $E^{(\lambda)}$ the set 
$$E^{(\lambda)}:=\left\{x\in\mathbb R^n: \lim_{r\to 0}\frac{|E\cap B_r(x)|}{|B_r(x)|}=\lambda\right\}.$$
Then the \emph{measure-theoretic boundary} $\partial_* E$ of $E$ is defined as $\partial_* E:=\mathbb R^n\setminus (E^{(0)}\cup E^{(1)})$.
Now we  illustrate  the following theorem, where such a theorem has beed studied under the Ahlfors--David regular assumption in \cite[Section 7]{DS1998}.
\begin{thm}\label{trace inequ}
Let  $\Omega\subset\mathbb R^n$ be a bounded $J$-John domain and of finite perimeter. Suppose that $\Omega$ satisfies
\begin{equation}\label{almost reduce}
    \mathcal H^{n-1}(\partial \Omega\setminus \partial_*\Omega)=0,
\end{equation} 
and there exists a constant $a_0>0$ so that, for  any $x\in \partial \Omega$ and any $0<r<r_0=c(n)\diam(\Omega),$   
\begin{equation}\label{hau cond 1}
    \mathcal H^{n-1}(\partial \Omega\cap B_{r}(x))\le a_0r^{n-1}. 
\end{equation}
Then for any $u\in W^{1,\,1}(\Omega)$ and $\mathcal H^{n-1}$-almost every $x\in \partial \Omega$, one has
\begin{equation}\label{trace defi}
    \lim_{r\to 0^{+}}\bint_{\Omega\cap B_r(x)}|u(y)-Tu(x)|dy=0
\end{equation}
exists, wherein the trace 
$$Tu(x):=\lim_{r\to 0^{+}}\bint_{\Omega\cap B_r(x)}u(y)dy$$ 
of $u$
is well-defined for $\mathcal H^{n-1}$-almost every   $x\in \partial \Omega$.

Moreover, for any $u\in W^{1,\,1} (\Omega)$, there exists a constant $C=C(n,\, a_0,\,J)$ such that 
\begin{equation}\label{trace conseq}
    \inf_{c\in \mathbb R}\int_{\partial \Omega} |Tu(x)-c|  \, d\mathcal H^{n-1}(x) \le C\int_{\Omega} |Du|\, dx.
\end{equation}
\end{thm}

\subsection{Application: A selection principle for the quantitative Wulff inequality}

Later, we apply Theorem~\ref{trace inequ} to $(\ez,\,r)$-minimizers, and conclude a selection principle for the quantitative Wulff inequality in \cite{FMP2010}. Let us recall the definition of $(\ez,\,r)$-minimizers. 
\begin{defn}\label{ez r}
   Given $\ez,\,r>0$, a set of finite perimeter $E\subset \mathbb R^n$ is an $(\ez,\,r)$-minimizer of $P_K$, if for any set $G\subset \mathbb R^n$ satisfying $E\Delta G\subset \subset x+rK$ with $x\in E$, one has
$$P_K(E)\le P_K(G)+ \ez |E\Delta G|.$$
\end{defn}

Observe that
an $(\ez,\,r)$-minimizer $E$ is a quasi-minimizer of the Euclidean perimeter $P$ in the sense of \cite[Definition 3.1]{KKLS2013}: for  any open set $U\subset\subset B_\rho(x)$ with  $\rho\in (0, r)$ and $x\in E$, and every set $G$ satisfying  $E\Delta G\subset \subset U$ 
, there exists $\kappa=\kappa(\ez,\,r,\,n)\ge 1$ so that 
\begin{equation}\label{quasiminimizer}
   P(E; U)\le\kappa P(G; U), 
\end{equation} 
where, for the distributional gradient $D\chi_E$ of the characteristic function $\chi_E$ of $E$, 
$$P(E;V)=\|D\chi_E\|(V)\quad \text{ for a measurable set } \ V\subset\mathbb R^n,$$
i.e. the total variation of $D\chi_E$ in $V$. We note that, in accordance with \cite[Theorem 9.6.4]{M2011}, the trace inequality \eqref{trace conseq} can also be derived under the condition that
$$\min\{P(G;\mathbb R^n\setminus \Omega),\,P(\Omega\setminus G;\mathbb R^n\setminus \Omega)\}\le C P(G;\,\Omega) \quad \text{ for any measurable set }\  G\subset \mathbb R^n. $$
Despite their similarities, this condition does not directly follow from \eqref{quasiminimizer} above.

Now the following theorem is a corollary of \cite{DS1998}; see also the discussion in \cite[Page 1620]{KKLS2013}. 
\begin{thm}[\cite{DS1998}]\label{main thm}
Let $\ez,\,r>0$, and $E\subset \mathbb R^n$ be an $(\ez,\,r)$-minimizer of $P_K$. Then there exists a constant $J_0=J_0\left(n,\epsilon, r\right)>0$ so that each component of $E$ is a  $(J_0,\, cr)$-John domain with $c=c\left (n,\epsilon, r\right)>0$.
\end{thm}

For an $(\ez,\,r)$-minimizer $E$  of $P_K$, we may regard it as a topologically open set, and equivalently integrate with respect to $\mathcal H^{n-1}$ on either its topological boundary or reduced boundary, according to \cite[Corollary 3.6]{ANP2002}.

Let us expound briefly on how the conditions in Theorem~\ref{trace inequ} are fulfilled by $(\ez,\,r)$-minimizers of $P_K$, thereby enabling the application of the selection principle. As one shall see in the proof of Proposition~\ref{selection}, $\ez$ can be chosen depending only on $n$ while $r$ is absolute.  
Then the standard density estimates \cite[Theorem 21.11]{M2012} yields \eqref{almost reduce} and \eqref{hau cond 1} with $a_0=a_0(n)$.
Recall from Remark~\ref{affine invar} that we may assume that $K$ satisfies
$$1\le \frac{M_K}{m_K}\le n.$$
Moreover, when the set $E$ is close to $K$  in the sense of \eqref{close to ball} below, Lemma~\ref{John minimizer} establishes the (global) $J$-John property of $E$.
Then Theorem~\ref{trace inequ} immediately implies that \eqref{1 trace inequ} also holds for $(\ez,\,r)$-minimizers of $P_K$ close to $K$.
 Consequently, we are empowered to utilize Theorem~\ref{trace inequ} and conclude the following proposition.

\begin{prop}\label{selection}
Let $1\le \frac{M_K}{m_K}\le n$, $\delta_k\to 0^+$ and $E_k\subset \mathbb R^n$ be a sequence of sets of finite perimeter for which  
\begin{equation}\label{close to ball}
(1-\delta)K\subset E_k\subset(1+\delta)K 
\end{equation}
for some $\delta=\delta(n)>0$ small, $|E_k|=|K|$, 
$E_k\to K$ in measure and
\begin{equation}\label{assumption}
    P_K(E_k)-P_K(K)\le \delta_k \min_{x\in\mathbb R^n}|E_k\Delta (x+K)|^2.
\end{equation}
We additionally assume that 
$\int_{E_k} x = 0.$

Then there exists a sequence of $J$-John domains $F_k\subset \mathbb R^n$ satisfying $|F_k|=|K|$ and  
\begin{equation}\label{barycond}
    \int_{F_k} x = 0.
\end{equation}
In addition,   
\begin{equation}\label{John selection}
    P_K(F_k)-P_K(K)\le \az_k \min_{x\in\mathbb R^n}|F_k\Delta (x+K)|^2
\end{equation}
with $J=J\left(n\right)$ and $\az_k\to 0$ as $k\to 0$. 
\end{prop}

Given that the proof of Proposition~\ref{selection} follows a standard approach and closely mirrors the one found in \cite[Pages 560-562]{F2015}, we have included it in the Appendix for completeness. To some extent, this proposition indicates that the stability of the (anisotropic) isoperimetric inequality derives from the stability of the John property of the minimizers under small perturbations.

The structure of our paper unfolds as follows:  In Section~\ref{sec:selection}, we firstly establish the trace inequality presented in Theorem~\ref{trace inequ}, and then confirm the selection principle outlined in Proposition~\ref{selection}, detailed in the Appendix.

{\bf Acknowledgement: } The authors wish to extend their heartfelt gratitude to the referee for their thoughtful review and constructive suggestions on an earlier draft of this manuscript. We are particularly grateful for the insightful observation that the result concerning the John regularity of $(\ez,\,r)$-minimal surfaces, as presented in the previous version, follows   from the foundational works \cite{DS1998} and \cite{KKLS2013}. We sincerely apologize for any oversight or inconvenience this may have caused and appreciate the opportunity to correct and improve the manuscript based on this feedback.

\section{Trace inequality in John domains}\label{sec:selection}

\subsection{Preliminary}\label{sec:minimizer john}

Let us establish some notation.
For a (rectifiable) curve $\gamma$, we denote by $\ell(\gamma)$ the Euclidean length of $\gamma$. When $\gamma$ is a curve, for any pair of points $x,\,y\in \gamma$, denote the subcurve joining $x$ to $y$ by $\gamma[x,\,y]$. For any measurable set $A\subset \mathbb R^n$, we define 
$$\bint_A u\,dx=\frac{1}{|A|}\int_A u\, dx\quad \text{for any }u\in L^1(A).$$

Recall that a measurable set $E\subseteq \R^n$  is said to have \emph{finite perimeter} if the distributional gradient of its characteristic function $\mathbf{\chi}_E$ is an $\mathbb R^n$-valued Radon measure $D\mathbf{\chi}_E$ with finite total variation, i.e., $|D\mathbf{\chi}_E|(\R^n)<\infty$.
According to the Lebesgue-Besicovitch differentiation theorem for measures, for $|D\mathbf{\chi}_E|$-a.e. $x$, it holds
\begin{equation}\label{cond}
\lim_{r\to 0^+} - \frac{D\mathbf{\chi}_E(x+rB^n)}{|D\mathbf{\chi}_E|(x+rB^n)} = \nu_E(x) \quad \text{and} \quad |\nu_E(x)| = 1.
\end{equation}
The set of points $x$ where this condition holds is called the \emph{reduced boundary} of $E$ and is denoted by $\partial^*E $.
 At points on the reduced boundary, $\nu_E(x)$ represents the \emph{measure-theoretic outward unit normal} to $E$ at $x$.
Also, up to changing $E$ in a set of measure zero, one can assume that $\overline{\partial^* E}=\partial E$.
We refer the interested reader to  \cite[Sections 12 and 15]{M2012} for more details on sets of finite perimeter.

\subsection{Trace inequality}

In this section, we always consider $K$ so that \eqref{mk control} is satisfied. 
We first record the following observation. 

\begin{lem}\label{lem:boundary curve}
    Let $\Omega\subset\mathbb R^n$ be a John domain with John center $x_0\in \Omega$. Then for any $x\in \partial \Omega$, there exists (at least) one  curve $\gamma_x\subset \overline{\Omega}$ with $\gz_x\setminus\{x\}\subset \Omega$ from $x$ to $x_0$ for which \eqref{John curve} holds. 
\end{lem}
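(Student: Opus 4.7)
The plan is to approximate the boundary point $x$ by interior points and pass to a limit of the corresponding John curves. Pick any sequence $\{x_k\}\subset \Omega$ with $x_k\to x$. By the John property, each $x_k$ admits a rectifiable curve $\gamma_k\subset\Omega$ from $x_k$ to $x_0$ satisfying
\begin{equation*}
\ell_{\|\cdot\|_{-K}}(\gamma_k[x_k,y])\le J\,\dist_K(y,\partial\Omega) \quad \text{for every } y\in\gamma_k.
\end{equation*}
Taking $y=x_0$ yields the uniform length bound $\ell_{\|\cdot\|_{-K}}(\gamma_k)\le L:=J\,\dist_K(x_0,\partial\Omega)$, which is finite since $\Omega$ is bounded.

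Next, I would apply an Arzel\`a--Ascoli argument. Reparametrize each $\gamma_k$ on $[0,1]$ at constant $\|\cdot\|_{-K}$-speed. Since $\|\cdot\|_{-K}$ is bi-Lipschitz equivalent to $|\cdot|$ with constants depending only on $M_K, m_K, \eta_K$ via \eqref{mk}, the family $\{\gamma_k\}$ is uniformly Lipschitz on $[0,1]$ and contained in the bounded set $\overline{\Omega}$. Passing to a subsequence, $\gamma_k\to\gamma$ uniformly on $[0,1]$, with $\gamma:[0,1]\to\overline{\Omega}$ Lipschitz, $\gamma(0)=x$, and $\gamma(1)=x_0$.

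Finally, I would verify the John inequality and the inclusion $\gamma\setminus\{x\}\subset\Omega$ for the limit. Fixing $t_0\in(0,1]$ and setting $y=\gamma(t_0)$, $y_k=\gamma_k(t_0)\to y$, the partition-sum definition of length together with continuity of $\|\cdot\|_{-K}$ gives the lower-semicontinuity bound
\begin{equation*}
\ell_{\|\cdot\|_{-K}}(\gamma[x,y])\le \liminf_k \ell_{\|\cdot\|_{-K}}(\gamma_k[x_k,y_k]),
\end{equation*}
so by continuity of $\dist_K(\cdot,\partial\Omega)$ and the John inequality for $\gamma_k$,
\begin{equation*}
\ell_{\|\cdot\|_{-K}}(\gamma[x,y])\le J\,\dist_K(y,\partial\Omega).
\end{equation*}
If some $y\in\gamma\setminus\{x\}$ lay on $\partial\Omega$, the right-hand side would vanish and force $\gamma[x,y]$ to be a single point, contradicting $y\ne x$; hence $\gamma\setminus\{x\}\subset\Omega$, and $\gamma$ is the desired curve. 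I do not foresee a substantive obstacle: the compactness argument is entirely standard, and the only point that merits care is the lower semicontinuity of the anisotropic length, which follows at once from the partition-sum definition and continuity of $\|\cdot\|_{-K}$.
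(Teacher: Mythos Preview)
Your proposal is correct and follows essentially the same approach as the paper: approximate $x$ by interior points, take the associated John curves, use the uniform length bound to apply Arzel\`a--Ascoli, and then verify that the limit curve inherits the John condition (whence $\gamma\setminus\{x\}\subset\Omega$). The paper's proof is simply a terser version of exactly this argument; your added details on lower semicontinuity of the anisotropic length and the contradiction yielding $\gamma\setminus\{x\}\subset\Omega$ are the natural way to unpack the paper's phrase ``the uniform convergence also implies that $\gamma_x$ satisfies \eqref{John curve}.''
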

\begin{proof}
     Take a sequence of points $x_i\in \Omega$ approaching $x$,  and the corresponding John curve $\gz_{x_i}\subset \Omega$ joining $x$ to $x_0$. Then $\ell(\gz_{x_i})$ is uniformly bounded according to \eqref{John curve}. Now by parametrizing $\gamma_{x_i}$ via arc length and up to relabeling the sequence, Arzel\'a-Ascoli lemma yields the uniform convergence of  $\gz_i$ to some curve $\gz_x\subset \overline{\Omega}$ joining $x$ to $x_0$. 
     
Moreover, the uniform convergence also implies that $\gz_x$  satisfies \eqref{John curve}. This gives 
$$\gz_x\setminus\{x\}\subset \Omega.$$ 
This concludes the lemma.  
\end{proof}

 Now we introduce the definition of an admissible domain. 

\begin{defn}\label{admissible}
    A bounded domain $\Omega\subset\mathbb R^n$ with finite perimeter is said to be admissible provided
    \begin{enumerate}
        \item [(i)] The measure-theoretic boundary of $\Omega$ almost coincides with its topological boundary, i.e.\
        \begin{equation}\label{almost theoretic}
            \mathcal{H}^{n-1}(\partial \Omega\setminus\partial_{*}\Omega)=0.
        \end{equation}
        \item [(ii)] For any $\widetilde x\in\partial\Omega$, there exists a positive constant $\Theta=\Theta(\Omega)$ and a ball $B_r(\widetilde x)$ so that 
         \begin{equation}\label{admissible 2}
         \mathcal H^{n-1}((\partial_{*} \Omega)\cap (\partial_* E))\le \Theta \mathcal H^{n-1}(\Omega\cap (\partial_* E))
         \end{equation}
         holds for each measurable set $E\subset\overline{\Omega}\cap B_r(\widetilde x)$.
    \end{enumerate}
\end{defn}

     In  \cite{Z1989}, it is shown that for an admissible domain $\Omega\subset \mathbb R^n$, the trace $Tu$ of the function  $u\in W^{1,1}(\Omega)$ can be defined in $L^1(\partial\Omega)$. To be more precise, as long as we prove that all the John domains $\Omega\subset\mathbb R^n$ satisfying the assumptions in Theorem~\ref{trace inequ} are admissible, \cite[Theorem 5.14.4]{Z1989} immediately ensures the existence of the trace for each $u\in W^{1,1}(\Omega)$. Additionally,   \cite[Theorem 5.10.7]{Z1989}   gives 
 $$\inf_c \int_{\partial\Omega} |Tu(x)-c|\, d\mathcal{H}^{n-1}(x)\le C(\Omega)\int_{\Omega} |Du(x)|\,dx.$$
  However, the inequality above is \emph{insufficient} for our purposes, even for the standard Euclidean norm, as the dependence of $C(\Omega)$ on $\Omega$ is not explicit in \cite[Theorem 5.10.7]{Z1989}. To address this issue, we present an alternative proof based on the John property, which provides an explicit dependence on the parameters.

Let us introduce the Whitney decomposition for an open set $\Omega\subsetneqq\mathbb R^n$.  For a  constant $c>0$ and any cube $Q\subset\mathbb R^n$  with center $x_Q=(a_1,\cdots, a_n)$ and sides parallel to the coordinate axis, we let $\ell(Q)$ be the edge length of $Q$ and then  rewrite 
$$\{x=(x_1,\cdots,x_n)\in \mathbb R^n: -c\ell(Q)/2\le x_i-a_i\le c\ell(Q)/2\}$$
as $cQ$ for brevity. Now we can state the following lemma on Whitney decomposition, see e.g.\ \cite[Chapter VI]{S1970}.

\begin{lem}[Whitney decomposition]\label{lma:whitney}
 For any open set $\Omega \neq \R^n$ there exists a collection
 $\mathscr F=\{Q_j\}_{j\in\mathbb N}$ of countably many closed dyadic cubes such that

 \begin{equation}\label{whitney cover v}
    \bigcup_{Q\in \mathscr{F}}Q=\Omega, \quad \chi_\Omega\le \sum_{Q\in\mathscr{F}}\chi_{\frac{11}{10}Q}\le C(n)\chi_\Omega.
\end{equation}

Moreover, for any $Q_i,\,Q_j\in\mathscr{F}$ with $Q_i\cap Q_j\neq \emptyset$, one has 
\begin{equation}\label{neig cube}
    \sqrt{n}\ell(Q_i)\le \dist(Q_i,\partial\Omega)\le 4\sqrt{n}\ell(Q_i)\quad\text{and}\quad \frac{1}{4}\le \frac{\ell(Q_i)}{\ell(Q_j)}\le 4.
\end{equation}
\end{lem}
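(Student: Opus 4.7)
The plan is to carry out the classical Whitney selection from the dyadic grid. For each $k \in \mathbb Z$, let $\mathscr D_k$ denote the closed dyadic cubes of side length $2^{-k}$ with vertices in $2^{-k}\mathbb Z^n$. I would first form the preliminary family
$$\mathscr F_0 := \bigcup_{k \in \mathbb Z} \bigl\{Q \in \mathscr D_k : \sqrt{n}\cdot 2^{-k} \le \dist(Q,\partial\Omega) \le 4\sqrt{n}\cdot 2^{-k}\bigr\},$$
and then extract $\mathscr F \subset \mathscr F_0$ by keeping only those cubes that are maximal with respect to inclusion. Since any two dyadic cubes are either nested or have pairwise disjoint interiors, the selected family $\mathscr F$ automatically has the latter property.

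To see $\bigcup_{Q \in \mathscr F} Q = \Omega$, I would fix $x \in \Omega$, set $d := \dist(x,\partial\Omega) > 0$ (finite since $\Omega \ne \mathbb R^n$), and choose $k$ to be the largest integer with $4\sqrt{n}\cdot 2^{-k} \le d$. For the (essentially unique) cube $Q \in \mathscr D_k$ containing $x$, the triangle inequality, together with the maximality of $k$, yields $\sqrt{n}\cdot 2^{-k} \le \dist(Q,\partial\Omega) \le 4\sqrt{n}\cdot 2^{-k}$, so $Q \in \mathscr F_0$, and hence $x$ lies in the maximal ancestor of $Q$ inside $\mathscr F$. The first bound $\sqrt{n}\ell(Q_i) \le \dist(Q_i,\partial\Omega) \le 4\sqrt{n}\ell(Q_i)$ in \eqref{neig cube} is a direct restatement of the selection criterion (with $\ell(Q)=2^{-k}$ for $Q \in \mathscr D_k$). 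The ratio bound $1/4 \le \ell(Q_i)/\ell(Q_j) \le 4$ for touching cubes then follows from $|\dist(Q_i,\partial\Omega) - \dist(Q_j,\partial\Omega)| \le \diam(Q_i) + \diam(Q_j)$ combined with the two-sided distance estimate, together with the fact that dyadic side lengths differ only by integer powers of two.

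It remains to verify $\chi_\Omega \le \sum_{Q \in \mathscr F} \chi_{(11/10)Q} \le C(n)\chi_\Omega$. The lower bound is immediate from the covering already established. For the upper bound, first $(11/10)Q \subset \Omega$ for each $Q \in \mathscr F$, because the enlargement adds only $\ell(Q)/20$ in each coordinate direction while $\dist(Q,\partial\Omega) \ge \sqrt{n}\,\ell(Q)$. Moreover, if $x \in (11/10)Q$, then $\ell(Q)$ is pinned to a bounded range around $\dist(x,\partial\Omega)$ by the selection criterion (with some mild slack from the $11/10$ dilation), so only finitely many dyadic scales contribute; at each such scale only $C(n)$ cubes of $\mathscr D_k$ can contain $x$ in their $11/10$-dilate. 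The only real bookkeeping step, and the main point where care is needed, is to balance the selection constants $\sqrt{n}$ and $4\sqrt{n}$ against the dilation factor $11/10$: they must be compatible so that every point of $\Omega$ is reached by some selected cube yet the $11/10$-enlargement is still absorbed by the slack $\dist(Q,\partial\Omega) \ge \sqrt{n}\,\ell(Q)$, producing the constant $C(n)$ without dependence on $\Omega$.
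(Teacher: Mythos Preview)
The paper does not supply a proof of this lemma; it is quoted as a standard result with a reference to \cite[Chapter~VI]{S1970}. Your sketch is precisely the classical dyadic selection argument found there and is correct in substance. One small slip: you write ``choose $k$ to be the largest integer with $4\sqrt{n}\cdot 2^{-k}\le d$'', but as $k\to\infty$ the left side tends to $0$, so there is no largest such $k$. What you want is the unique $k$ with $d/(4\sqrt{n})<2^{-k}\le d/(2\sqrt{n})$ (equivalently, the smallest $k$ with $2\sqrt{n}\cdot 2^{-k}\le d$); for this $k$ the cube $Q\in\mathscr D_k$ containing $x$ satisfies $\dist(Q,\partial\Omega)\ge d-\sqrt{n}\,2^{-k}\ge\sqrt{n}\,2^{-k}$ and $\dist(Q,\partial\Omega)\le d<4\sqrt{n}\,2^{-k}$, hence $Q\in\mathscr F_0$ as you claim.
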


 We first show that every John domain in Theorem~\ref{trace inequ} is admissible. 

 \begin{lem}\label{admissible proof}
     A bounded $J$-John domain $\Omega\subset\mathbb R^n$ with finite perimeter is admissible, provided \eqref{almost reduce} and \eqref{hau cond 1} are satisfied. Then every function $u\in W^{1,1}(\Omega)$ has a trace in  $L^1(\partial \Omega)$.
 \end{lem}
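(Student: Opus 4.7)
The plan is to verify that $\Omega$ satisfies both defining conditions (i) and (ii) of Definition~\ref{admissible}; once this is established, the existence of $Tu\in L^1(\partial\Omega)$ for every $u\in W^{1,1}_K(\Omega)$ is a direct consequence of \cite[Theorem 5.14.4]{Z1989}, noting that the norm equivalence \eqref{mk} under the normalization \eqref{mk control} makes $W^{1,1}_K(\Omega)$ and $W^{1,1}(\Omega)$ equivalent up to multiplicative constants depending only on $n$.

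For (i), I would establish the stronger fact that $\partial\Omega\subseteq\partial_*\Omega$, so that the exceptional set $\partial\Omega\setminus\partial_*\Omega$ is in fact empty. The upper density bound at each $x\in\partial\Omega$ is exactly \eqref{almost reduce}. For the matching lower density bound, Lemma~\ref{lem:boundary curve} produces a John curve $\gamma_x\subset\overline\Omega$ from $x$ to the John center, with $\gamma_x\setminus\{x\}\subset\Omega$. Picking the point $y_r\in\gamma_x$ at $\|\cdot\|_{-K}$-arclength approximately $r$ from $x$, the John inequality \eqref{John curve} forces $\dist_K(y_r,\partial\Omega)\gtrsim r/J$; by \eqref{mk} and \eqref{mk control}, this produces a $\|\cdot\|_K$-ball of radius $\sim r/J$ around $y_r$ contained in $\Omega\cap B_{\|\cdot\|_K,Cr}(x)$, so $|\Omega\cap B_{\|\cdot\|_K,s}(x)|\gtrsim s^n$ for all small $s$, completing (i).

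To verify (ii), I would fix $x\in\partial\Omega$, choose $r$ sufficiently small (with $r<r_0$ from \eqref{almost reduce} and $r\ll\diam(\Omega)$), and consider $E\subset\overline\Omega\cap B_r(x)$. Since $|E|\le(1-\sigma)|B_r(x)|\le|\Omega|/2$ for $r$ small, the relative isoperimetric inequality for John domains \cite{B1988,HK2000} applied in $\Omega$ yields $|E|^{(n-1)/n}\le C_J\,\mathcal H^{n-1}(\Omega\cap\partial_* E)$, while \eqref{hau cond 1} gives $\mathcal H^{n-1}(\partial_*\Omega\cap\partial_* E)\le C(n)\,r^{n-1}$. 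When $|E|\gtrsim r^n$, these combine directly to yield \eqref{admissible 2} with $\Theta=\Theta(n,J)$. For the remaining thin regime $|E|\ll r^n$, I would perform a Whitney-type decomposition of $\Omega\cap B_r(x)$ at the scale $|E|^{1/n}$ and iterate the bulky argument, summing the local estimates across dyadic scales.

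The main obstacle is the thin-set regime of step two: a priori, a small $E$ could concentrate its boundary along $\partial\Omega$ in a pattern that inflates $\mathcal H^{n-1}(\partial_*\Omega\cap\partial_* E)$ out of proportion with the interior perimeter. The plumpness of $\Omega$ at every boundary scale established in step (i) — ensuring uniform interior access at every point of $\partial\Omega$ and at every scale — is precisely what forces $\mathcal H^{n-1}(\Omega\cap\partial_* E)$ to grow proportionally, so that the Whitney iteration closes with a constant $\Theta$ depending only on $n$ and $J$.
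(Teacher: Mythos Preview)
Your verification of (i) is correct and matches the paper. For (ii), the bulky case $|E|\gtrsim r^n$ works as you say, but the thin regime is where the content lies, and your Whitney iteration does not close. Concretely: once you localize $E$ to a sub-ball $B_\rho(y)$ and apply a relative isoperimetric inequality to $E\cap B_\rho(y)$, the right-hand side is $\mathcal H^{n-1}\big(\Omega\cap\partial_*(E\cap B_\rho(y))\big)$, which contains the slicing contribution $\mathcal H^{n-1}(E\cap\partial B_\rho(y)\cap\Omega)$ of order $\rho^{n-1}$. That term alone already saturates the upper bound $\mathcal H^{n-1}(\partial_*\Omega\cap\partial_*E\cap B_\rho(y))\lesssim\rho^{n-1}$ coming from \eqref{hau cond 1}, so the local estimate is vacuous and no summation over scales recovers information about $\mathcal H^{n-1}(\Omega\cap\partial_*E)$ itself. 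Plumpness of $\Omega$ does not remove the artificial boundary created by localization; the obstruction is intrinsic to cutting $E$.

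The paper's key idea, which you are missing, is to regularize $E$ before counting. One solves the Plateau problem of minimizing $P(V;\Omega)$ among competitors $V\subset\Omega\cap B_r(x)$ with the same trace on $\partial\Omega$ as $E$, obtaining a local perimeter minimizer $U$ with $\mathcal H^{n-1}(\partial U\cap\Omega)\le\mathcal H^{n-1}(\partial_*E\cap\Omega)$, so it suffices to prove \eqref{admissible 2} for $U$. The gain is that $\partial U\cap\Omega$ now enjoys a \emph{lower} perimeter density bound $\mathcal H^{n-1}(\partial U\cap\tfrac{11}{10}Q)\gtrsim\ell(Q)^{n-1}$ on every Whitney cube $Q$ of $\Omega$ it meets. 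For each $z\in U\cap\partial\Omega$ the John curve toward the center (which lies outside $B_r(x)\supset U$) must cross $\partial U\cap\Omega$ inside some Whitney cube $Q_z$, and \eqref{John curve} forces $z\in C_0Q_z$. Summing $\ell(Q)^{n-1}$ over the cubes so arising, the lower density bound controls the sum from above by $\mathcal H^{n-1}(\partial U\cap\Omega)$ while \eqref{hau cond 1} controls it from below by $\mathcal H^{n-1}(U\cap\partial\Omega)$, yielding \eqref{admissible 2}. This is precisely the counting you gesture at in your last paragraph, but it only works once the Plateau step supplies the lower density bound that the raw set $E$ need not have.
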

\begin{proof} 
We firstly note that \eqref{almost theoretic} immediately follows from the assumption \eqref{almost reduce}.

Suppose that $x_0\in \Omega$ is the John center of $\Omega$ and $\mathscr{F}$ is the Whitney decomposition of $\Omega$. 
Next we verify \eqref{admissible 2}. Toward this,  
for any $\widetilde x\in \partial \Omega$, we first choose  $r>0$ with $x_0\notin B_{3r}(\widetilde x)$, and let $E\subset\overline{\Omega}\cap B_r(\widetilde x)$ be a measurable set.

According to \eqref{admissible 2}, we may assume that 
$$\mathcal H^{n-1}(\Omega\cap \partial_* E)<\infty.$$
Since  $\Omega$ is of finite perimeter, Federer's theorem \cite[Theorem 16.2]{M2012} together with the fact
$$\partial_*E = (\partial_* E \cap \partial \Omega)\cup (\Omega\cap \partial_* E) $$
implies that $E$ also is of finite perimeter.
Consequently,  we may further assume that $E$ has finite perimeter. 

\medskip
\noindent{\bf Step 1: Replace $E$ by a smooth open set.} We first show the existence of   a smooth open set $\widetilde E\subset \mathbb R^n$ satisfying 
\begin{equation}\label{replace}
    \mathcal H^{n-1}(\partial \widetilde E\cap \partial \Omega)=0,\  \mathcal{H}^{n-1}(\partial_*\Omega\cap \partial_*E)\le 2\mathcal{H}^{n-1}(\partial_*\Omega\cap \widetilde E)\ \text{and}\  P(\widetilde E;\overline{\Omega})\le 2\mathcal{H}^{n-1}(\partial_*E\cap \Omega).
\end{equation}

Toward this, let $\ez>0$, $\eta\in C^\infty_c(B)$ radially symmetric with $\int_{B_1} \eta\, dx=1$, and 
$$\eta_\ez(x)=\ez^{-n}\eta\left(\frac x \ez\right)\ge 0.$$
Define
$$u_{\epsilon}(x):=\int_{\mathbb R^n}\chi_E(y)\eta_\epsilon(x-y)dy =\int_{\mathbb R^n}\chi_{\frac{E-x}{\epsilon}}(z)\eta(z)dz.$$
Then when $x\in \partial^*E$, since $(E-x)/\epsilon$ converges to some hyperplane $H_x^-$ as $\epsilon\to 0$, we have 
\begin{align*}
    u_{\epsilon}(x)\to \int_{\mathbb R^n}\chi_{H_x^-}(y)\eta(z)dz=\frac{1}{2}\quad \text{as } \epsilon\to 0.
\end{align*}
Thus
\begin{equation}\label{open inclu}
    \partial^*E\subset \bigcup_{a>0}\bigcap_{0<\epsilon<a}\{x\in \mathbb R^n: u_{\epsilon}(x)>t\}
\end{equation}
holds for any $t\in (0,1/4)$. 

Choose a sequence $\{\epsilon_h\}_{h\in\mathbb N}$ satisfying $\epsilon_h \to 0^+$ and let $$u_h:=u_{\epsilon_h},\quad E_{h}^t:=\{x\in \mathbb R^n: u_{h}(x)>t\}.$$ 
Since $\Omega$ is John, by \cite[Theorem 2.8]{V2000},
$|\partial\Omega|=0$, and hence the coarea formula  yields, for any $h\in \mathbb N$, 
$$0=\int_{\partial\Omega}|Du_{h}|dx=\int_{\mathbb R} P(E_h^t;\partial\Omega) dt,$$ 
thus giving
\begin{equation}\label{perime0}
    P(E_h^t;\partial\Omega)=0\quad \text{for a.e. }t\in (0,1).
\end{equation}
Moreover, the argument of \cite[Theorem 13.8]{M2012} implies that
\begin{equation}\label{peri converge}
    P(E;\Omega)=\liminf_{h\to +\infty}P(E_h^t;\Omega) \quad \text{for a.e. } t\in (0,1).
\end{equation}
Since we only have countably many $h\in \mathbb N$ in question, there exists $t\in (0,\frac{1}{4})$ satisfying \eqref{perime0} and \eqref{peri converge} for any $h\in \mathbb N$.

Fix such a constant $t\in (0,\frac{1}{4})$. 
Then thanks to \eqref{open inclu} and Federer's theorem, for $h\gg 1$. we obtain that  
$$\mathcal{H}^{n-1}(\partial_*\Omega\cap \partial_*E)=\mathcal{H}^{n-1}(\partial_*\Omega\cap \partial^*E)\le 2\mathcal{H}^{n-1}(\partial_*\Omega\cap E_h^t)$$
and from \eqref{perime0} and \eqref{peri converge} that
$$
 \mathcal H^{n-1}(\partial E^t_h\cap \partial \Omega)=0 \quad \text{ and } \quad  P(E_h^t;\overline{\Omega})= P(E_h^t;\Omega)\le 2P(E;\Omega)= 2\mathcal{H}^{n-1}(\partial_*E\cap \Omega).
$$
 Thus, $\widetilde E:=E_h^t$ is the desired open set satisfying \eqref{replace}.
 
 Recall that $E\subset \overline{\Omega} \cap B_r(\widetilde x)$. As $h\gg 1$, we also have   
$\widetilde E\subset B_{2r}(\widetilde x)$.
 Additionally, in order to prove \eqref{admissible 2}, \eqref{replace} tells that it suffices to show 
\begin{equation}\label{regu epsi}
\mathcal{H}^{n-1}(\partial_*\Omega\cap \widetilde E) \le \frac{\Theta }{4}P(\widetilde E;\overline{\Omega}).
\end{equation}
\medskip
\noindent{\bf Step 2: A regularization of $\widetilde E$ .}
Next, we aim to replace $\widetilde E$ by a more regular set so that it has certain lower density estimate in $\Omega$.

To this end, we consider a set of finite perimeter $ U\subset \mathbb R^n$ minimizing the Plateau problem:
$$\inf\{P(G): G\setminus \Omega = \widetilde E\setminus\Omega\}.$$
From \cite[Proposition 12.29]{M2012} and \cite[Example 16.13]{M2012},  it follows that $ U$ exists and is a perimeter minimizer. Additionally, we also have $U\subset B_{2r}(\widetilde x)$. Otherwise, since
$$P(U\cap B_{2r}(\widetilde x))+P(U\cup B_{2r}(\widetilde x))=P(U) + P(B_{2r}(\widetilde x)),$$
the isoperimetric inequality yields
$$P(U\cap B_{2r}(\widetilde x))-P(U)= P(B_{2r}(\widetilde x))-P(U\cup B_{2r}(\widetilde x))< 0,$$
and then, by recalling that $U\setminus \Omega=\widetilde E\setminus \Omega\subset B_{2r}(\widetilde x)$, $U$ cannot be a minimizer.

Moreover, 
\begin{equation}\label{desity estimate mini}
c(n)\rho^{n-1}\le P(U;B_\rho(y))\le n|B|\rho^{n-1} \quad \text{ for any  $B_{2\rho}(y)\subset\subset \Omega$ with $y\in \Omega\cap \partial U$},
\end{equation}
 given by \cite[Theorem 16.14]{M2012}.  
In particular,  combining \eqref{neig cube} and \eqref{desity estimate mini},  for any Whitney cube $Q\in \mathscr{F}$ with $Q\cap \partial U\neq \emptyset$, there exists a constant $c_1=c_1(n)>0$, such that  
 \begin{equation}\label{lower density min}
     \mathcal{H}^{n-1}\left(\partial  U\cap \frac{11}{10} Q\right)\ge c_1(n)l(Q)^{n-1}.
\end{equation}

In addition, by \cite[Exercise 12.16]{M2012}, the constraint of $U$ outside $\Omega$ yields $$P(U; \mathbb R^n\setminus \overline{\Omega})=P(\widetilde E; \mathbb R^n\setminus \overline{\Omega}).$$
This, combined with the minimality of $U$, 
implies
\begin{equation}\label{rh2}
    P(U;\overline{\Omega})\le P(\widetilde E;\overline{\Omega}). 
\end{equation} 
Moreover, due to the openness of $\widetilde E$, the constraint of $U$ outside $\Omega$ also gives 
\begin{equation}\label{lh2}
     (U\setminus\Omega)\cap B_r(x) =(\widetilde E\setminus\Omega)\cap B_r(x)=B_r(x)\setminus\Omega\quad \text{for any }x\in \partial_*\Omega\cap \widetilde E\text{ and }r\ll 1.
\end{equation}
Hence, it follows from \eqref{lh2} that $U^{(0)}\cap \partial_*\Omega\cap U =\emptyset$. Indeed, for every $z\in  \partial_*\Omega\cap U$, then \eqref{lh2} implies, for $r\ll1$,
\begin{align*}
    |U\cap B_r(z)|=& |(U\setminus\Omega)\cap B_r(z)|+|(U\cap\Omega)\cap B_r(z)|\ge |(U\setminus\Omega)\cap B_r(z)|=|B_r(z)\setminus \Omega|, 
\end{align*} 
and hence $z\notin U^{(0)}$ as  $z\in\partial_*\Omega.$
Thus via Federer's theorem again, 
\begin{equation}\label{lh3}
    \mathcal{H}^{n-1}(\partial_*\Omega\cap \widetilde E)=\mathcal{H}^{n-1}(\partial_*\Omega\cap U)\le \mathcal{H}^{n-1}(\partial_*\Omega\cap U^{(1)})+\mathcal{H}^{n-1}(\partial_*\Omega\cap \partial^* U).
\end{equation}
Besides, we have
\begin{equation}\label{rh3}
    P(U;\overline{\Omega})=\mathcal{H}^{n-1}(\partial U\cap \Omega)+\mathcal{H}^{n-1}(\partial^*U\cap \partial\Omega).
\end{equation}

We claim that, for some  constant $\Theta'=\Theta'(n,\,J,\,a_0)>0$
\begin{equation}\label{E to U}
    \mathcal H^{n-1}( U^{(1)}\cap  \partial \Omega)\le \Theta' \mathcal H^{n-1}(\partial U\cap  \Omega).
\end{equation} 
Once this holds, then by  \eqref{rh2}, \eqref{lh3} and \eqref{rh3}, 
\begin{align*}
    \mathcal{H}^{n-1}(\partial_*\Omega\cap \widetilde E)=& \mathcal{H}^{n-1}(\partial_*\Omega\cap U)\le  \mathcal{H}^{n-1}(\partial_*\Omega\cap U^{(1)})+\mathcal{H}^{n-1}(\partial_*\Omega\cap \partial^* U)\\
    \le & \mathcal{H}^{n-1}(\partial\Omega\cap U^{(1)})+\mathcal{H}^{n-1}(\partial\Omega\cap \partial^* U)\\
 \le & \Theta'\mathcal H^{n-1}(\partial U\cap  \Omega) +   P(U;\overline{\Omega})-\mathcal{H}^{n-1}(\partial U\cap \Omega)\\
 \le & (1+\Theta') P(U;\overline{\Omega})\le (1+\Theta')P(\widetilde E;\overline{\Omega}),
\end{align*}
which immediately yields \eqref{regu epsi} with $\Theta=4(1+\Theta')$.

\medskip 
\noindent{\bf Step 3: Proof of \eqref{E to U}.}  
Recall that $\Omega$ is a John domain with center $x_0\in \Omega$.  Then for any $x\in U^{(1)}\cap \partial \Omega$, we first claim that, there exists a $C(J)$-John curve $\widetilde\gz_x$ with $\tilde \gz_x\setminus\{x\}\subset \Omega$ such that
\begin{align}\label{claim:curve}
  \widetilde \gz_x\setminus\{x\}\text{ intersects $\partial U\cap \Omega$ at some point $y_x\in \partial U\cap \Omega$.}
\end{align}

Indeed, suppose that the John curve $\gamma_x$ given by Lemma~\ref{lem:boundary curve} 
intersects $\overline{U}\cap \Omega$. Then by the connectivity of $\gamma_x$ and the fact that
$$x_0\in \mathbb R^n \setminus B_{3r}(x)\subset \mathbb R^n\setminus U,$$ we conclude 
$(\gamma_x\setminus\{x\})\cap\partial(U\cap\Omega)\neq \emptyset.$
Since $\gamma_x\setminus\{x\}\subset \Omega$, then it follows that
$$(\gamma_x\setminus\{x\})\cap\partial(U\cap\Omega)\subset \partial U\cap \Omega,$$
and the claim follows. Thus, we may assume that   $\gamma
_x$ satisfies
$$\gamma_x\setminus \{x\}\subset (\mathbb R^n\setminus \overline{U})\cap \Omega.$$

Since the definition of John curve yields that, for any $z\in \gamma_x$, 
$$J\dist(z,\,\partial \Omega)\ge  \ell(\gamma_x[x,\,z]), $$
then we obtain that 
$$B(z,\ell(\gamma_x[x,z])/(3J))\subset\subset\Omega. $$
Since $x\in U^{(1)}$, when $z$ is sufficiently close to $x$, one has 
\begin{equation}\label{bz intersection}
    B(z,\ell(\gamma_x[x,z])/(3J))\cap U\cap \Omega\neq \emptyset.
\end{equation} 
Otherwise, for 
$$s:=\left(1+\frac{1}{3J}\right)\ell(\gamma_x[x,z]) \ge \left(1+\frac{1}{3J}\right) |x-z|,$$
one has
$$B(z,\ell(\gamma_x[x,z])/(3J))\subset B(x,\,s)\setminus U,$$
and hence
\begin{align*}
    \frac{|U\cap B(x,s)|}{|B(x,s)|}\le 1-\frac{|B(z,\, \ell(\gamma_x[x,z]))/(3J))|}{|B(x,s)|}\le 1-\frac{|B(z,\,cs)|}{|B(x,s)|}\le 1-c^n,
\end{align*}
for some $0<c=c(J)<1$, a contradiction to $x\in U^{(1)}$. This proves \eqref{bz intersection}, and we take 
$$w_z\in B(z,\ell(\gamma_x[x,z])/(3J))\cap (\partial U\cap \Omega).$$

Now according to \eqref{bz intersection}, by concatenating the curves 
$$\gamma_x[x,\,z] \cup [z,\,w_z]\cup [w_z,\,z]\cup \gamma_x{}[z,\,x_0]:=\tilde\gamma_x,$$
where we denote by $[z,\,w_z]$ the segment joining $z$ to $w_z$ (with the given direction), one gets the desired John curve $\tilde \gamma_x$ satisfying \eqref{claim:curve} since
$$\ell(\gamma_x[x,\,z]) + \ell([z,\,w_z]) + \ell([z,\,w_z])\le  \left(J+\frac 2 3\right) \dist(z,\,\partial\Omega)\le 2\left(J+\frac 2 3\right)\dist([z,\,w_z],\,\partial \Omega). $$

As a consequence of \eqref{claim:curve} and the definition of John curve \eqref{John curve}, we conclude that, for any $x\in U^{(1)}\cap \partial \Omega$, one has
$$|x-y_x|\le \ell(\tilde\gamma_x[x,\,y_x])\le J \dist(y_x,\,\partial\Omega).$$
Thus, according to \eqref{neig cube}, there exists $C_0=C_0(n,\,J)>0$ for which
\begin{equation}\label{Q cover boundary}
    x\in C_0 Q_x \quad \text{ whenever } \  x\in U^{(1)}\cap \partial \Omega \ \text{ and } \  y_x\in Q_x\in \mathscr F. 
\end{equation}

Now we are ready to show \eqref{E to U}. Let 
$$\mathscr S:=\{Q\in \mathscr{F}:Q=Q_x \text{ given by \eqref{Q cover boundary} for some $x\in  U^{(1)}\cap \partial \Omega$}\}.$$ 
Then it follows from \eqref{Q cover boundary} that 
\begin{equation}\label{boundary covering0}
      U^{(1)}\cap \partial\Omega\subset   \bigcup_{Q\in \mathscr S} C_0 Q.
\end{equation}
Furthermore, for any $Q\subset \mathscr{S}$, we choose $x_Q\in U^{(1)}\cap \partial\Omega$ and a ball $D_{Q}$ with center $x_Q$ such that 
\begin{equation}\label{ball covering}
    C_0 Q\subset D_{Q} \quad \text{and}\quad \diam(D_Q)=2 \sqrt{n}C_0\ell(Q).
\end{equation}
Hence, combining \eqref{boundary covering0} and \eqref{ball covering}, we have 
\begin{equation}\label{boundary covering}
      U^{(1)}\cap \partial\Omega\subset \bigcup_{Q\in \mathscr S} D_Q.
\end{equation}

Now since $\{Q\}_{Q\in \mathscr F}$ covers the whole domain $\Omega$, and $\{\frac {11}{10} Q\}_{Q\in \mathscr F}$ has at most $C(n)$-overlaps by \eqref{whitney cover v},  we conclude 
\begin{align}
       \sum_{Q\in \mathscr S}\mathcal{H}^{n-1}\left(\partial U\cap \frac{11}{10}Q\right)&\le    \sum_{Q\in \mathscr{F}}\mathcal{H}^{n-1}\left(\partial U\cap \frac{11}{10}Q\right) \nonumber \\
   &\le C(n) \sum_{Q\in \mathscr{F}}\mathcal{H}^{n-1}(\partial U\cap  Q)=C(n)  \mathcal{H}^{n-1}(\partial U\cap \Omega).\label{interior part}
\end{align}
On the other hand, \eqref{lower density min}
gives 
\begin{equation}\label{lower bound S}
  \sum_{Q\in \mathscr S}\mathcal{H}^{n-1}\left(\partial U\cap \frac{11}{10}Q\right)\ge C(n)\sum_{Q\in \mathscr S }l(Q)^{n-1}.   
\end{equation}
In addition, combining \eqref{hau cond 1}, \eqref{ball covering} and  \eqref{boundary covering}, we have
\begin{align}
    \sum_{Q\in \mathscr S }l(Q)^{n-1}& \ge C(n,\,J)\sum_{Q\in \mathscr S }\diam(D_Q)^{n-1} \ge C(n,\,J,\,a_0)\sum_{Q\in \mathscr S } \mathcal{H}^{n-1}\big(\partial\Omega \cap D_Q\big)\nonumber \\
    &\ge   C(n,\,J,\,a_0)\mathcal{H}^{n-1}(U^{(1)}\cap \partial \Omega).\label{boundary part}
\end{align}

Combining \eqref{interior part}, \eqref{lower bound S} and \eqref{boundary part}, we conclude \eqref{E to U}, and hence \eqref{admissible 2} for $E$. Therefore, $\Omega$ is admissible, and \cite[Theorem 5.14.4]{Z1989} gives the rest of the lemma. 
 The proof is completed.
\end{proof}

Now we are ready to prove Theorem~\ref{trace inequ}.
\begin{proof}[Proof of Theorem \ref{trace inequ}]

Let $x_0\in \Omega$ be the John center of $\Omega$ and $\mathscr{F}$ be the set of all Whitney cubes of $\Omega$. We choose $Q_0$ to be a cube in $\mathscr{F}$ with the John center $x_0\in Q_0$. For any $Q\in \mathscr{F}$, we denote by
$$\hat{Q}:=\frac{11}{10}Q,\qquad  u_{\hat{Q}}:=\bint_{\hat{Q}} u(x)dx.$$ 
Since $\Omega$ is admissible by Lemma~\ref{admissible proof}, then 
\eqref{trace defi} follows from  \cite[Theorem 5.14.4]{Z1989}.  We next show \eqref{trace conseq}. 

\medskip

\noindent{\bf Step 1 : Estimate $|u_{\hat{Q
}_i}-u_{\hat{Q
}_j}|$ for any pair of cubes  $Q_i,Q_j\in \mathscr{F}$ with $Q_i\cap Q_j\neq\emptyset$.} 
Note that there exists a cube $R$  with 
\begin{equation}\label{cap cube size}
    R\subset \hat{Q}_i\cap \hat{Q}_j\quad \text{and} \quad \ell(R)=\frac{1}{20}\min\{\ell(Q_i),\ell(Q_j)\}. 
\end{equation}
Then by the triangle inequality,  
$$|u_{\hat{Q
}_i}-u_{\hat{Q
}_j}|\le |u_{\hat{Q}_i}-u_R|+|u_R-u_{\hat{Q}_j}|.$$

As \eqref{neig cube} together with \eqref{cap cube size} gives $\ell(R)\ge \frac{1}{80}\ell(Q_i)$,  we apply the triangle inequality and  the $1$-Poincar\'e inequality on $\hat{Q}_i$ to conclude 
\begin{align}\label{poin 2}
    |u_{\hat{Q}_i}-u_R|&\le \bint_{\hat{Q}_i}\left(\bint_{R}|u(y)-u(z)|dz\right)dy\le C(n)\bint_{\hat{Q}_i}\left(\bint_{\hat{Q}_i}|u(y)-u(z)|dz\right)dy\nonumber\\
    & \le C(n)\bint_{\hat{Q}_i}|u(y)-u_{\hat{Q}_i}|dy \le C(n)\ell(Q_i)\bint_{\hat{Q}_i}|Du(y)|dy.  
\end{align}
Likewise, we obtain a similar upper bound for $|u_R-u_{\hat{Q}_j}|$, thus
\begin{equation}\label{estimate cube 6}
    |u_{\hat{Q}_i}-u_{\hat{Q}_j}|\le  C(n)\left(\ell(Q_i)\bint_{\hat{Q}_i}|Du(y)|dy+\ell(Q_j)\bint_{\hat{Q}_j}|Du(y)|dy\right).
\end{equation}

\noindent{\bf Step 1.2:  Estimate  $|Tu(x)-u_{\hat{Q}_0}|$ for any $x\in \partial \Omega$.} 
Fix $x\in \partial \Omega$. 
According to Lemma~\ref{lem:boundary curve},  there exists a $J$-John curve $\gamma_x$ joining $x$ to $x_0$. Moreover, the definition \eqref{John curve} of the John curve yields that, every Whitney cube $Q\in \mathscr{A}_x:=\{Q\in\mathscr{F}:Q\cap \gamma_x\neq\emptyset\}$  satisfies 
\begin{equation}\label{John cube}
    \hat{Q}\subset B_{C_1\ell(Q)}(x)\cap \Omega \quad \text{ and } \quad   x\in C_2Q,
\end{equation}
for some positive constants  $C_1=C_1(n,\,J)$ and $C_2=C_2(n,\,J)$, since the length of the subcurve of $\gamma_x$  between $x$ and $Q$ is at most $C(n,\,J)\diam(Q)$. 

Relabel $\mathscr{A}_x=\{Q_k\}_{k\in\mathbb N}$ so that $Q_k\cap Q_{k+1}\neq \emptyset, k\in \mathbb N$ together with $Q_k\to x$ as $k\to \infty$, and recall that $x_0\in Q_0$.
Then \eqref{trace defi} implies
\begin{equation}\label{convergence along curve 2}
   \lim_{k\to \infty}|u_{\hat{Q}_k}-Tu(x)| =0.
\end{equation}
Consequently, via \eqref{estimate cube 6}, we get that 
\begin{equation}\label{S2 estimate}
    |Tu(x)-u_{\hat{Q}_0}|\le \sum_{k=0}^{+\infty}|u_{\hat{Q
}_{k+1}}-u_{\hat{Q
}_k}|\le \sum_{k=0}^{+\infty} C(n)\ell(Q_k)\bint_{\hat{Q}_k}|Du(y)|dy.
\end{equation}

\noindent{\bf Step 1.3: Final estimate.} 
Now we integrate both sides of \eqref{S2 estimate} on $\partial \Omega$ with respect to $\mathcal H^{n-1}$-measure and obtain that
\begin{equation}\label{integral n-1}
\int_{\partial \Omega} |Tu(x)-u_{\hat{Q}_0}|\, d\mathcal H^{n-1}(x)\le C(n) \int_{\partial \Omega} \sum_{Q_k \in \mathscr A_x} \ell(Q_k)^{1-n}\int_{\hat{Q}_k}|Du(y)|\,dy\, d\mathcal H^{n-1}(x).
\end{equation}

By \eqref{John cube}, for each $Q\in\mathscr{F}$, choose a point $x_Q\subset C_2 Q\cap \partial  \Omega$ and then 
$$
    C_2Q \subset B_{C_1C_2\ell(Q)}(x_Q)=:D_Q.
$$
Hence, by \eqref{John cube} and the definition of $\mathscr{A}_x$, we have
$$\{x\in \partial \Omega\colon Q\in \mathscr A_x\}\subset D_Q\cap \partial \Omega.$$
Then since
$$\mathcal H^{n-1}(D_Q\cap  \partial \Omega)\le C(n,\,J,\,a_0) \diam(D_Q)^{n-1} \le C(n,\,J,\,a_0) \ell(Q)^{n-1}$$
according to \eqref{hau cond 1}, by interchanging the integral and summation on the right-hand side of \eqref{integral n-1} via Fubini's theorem, we arrive at
\begin{align*}\int_{\partial \Omega} |Tu(x)-u_{\hat{Q}_0}|\, d\mathcal H^{n-1}(x)\le & C(n,\,J,\,a_0)   \sum_{Q\in\mathscr F} \ell(Q)^{1-n}\mathcal H^{n-1}(D_Q\cap  \partial \Omega) \int_{\hat{Q}}|Du(y)|\,dy \\
\le & C(n,\,J,\,a_0) \sum_{Q\in\mathscr F}   \int_{\hat{Q}}|Du(y)|\,dy \le  C(n,\,J) \int_{\Omega} |Du(y)|\, dy.
\end{align*}
Thus we conclude \eqref{trace conseq}.
\end{proof}

\section{An extra step to Proposition \ref{selection}}

In this section, we always consider $K$ satisfying \eqref{mk control}. 
Towards Proposition \ref{selection}, we need the following hypothesis,  that the set $E\subset\mathbb R^n$ satisfies \eqref{close to ball}
for some $\delta=\delta(n)>0$. Indeed, via selection principle, towards the stability of Wulff inequalities with respect to $P_K$, it suffices to consider the case where \eqref{close to ball} is satisfied.

When $\delta>0$ is small enough, we show that any $(\ez,\,r)$-minimizer  $E$  of $P_K$is a John domain based on Theorem~\ref{main thm}. In particular, $E$ is connected. 

\begin{lem}\label{John minimizer}

Let  $0<\delta\le \frac{c}{3 nJ_0} r$, where $J_0$ and $c$ are the constants in  Theorem~\ref{main thm}. Then any (component of) $(\ez,\,r)$-minimizer $E$ satisfying \eqref{close to ball} is a $J$-John domain with $J=J(n, \epsilon, r).$
\end{lem}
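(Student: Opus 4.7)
The plan is to take the origin $0\in (1-\delta)K\subset E$ as the John center and, for each $x\in E$, to build a John curve to $0$ by, when necessary, concatenating a curve produced by Theorem~\ref{main thm} (applied locally near $\partial E$) with a straight segment into the convex core $(1-\delta)K$. The hypothesis $\delta\le \tfrac{c}{3n}r$ is calibrated so that the local-John scale $2cr$ from Theorem~\ref{main thm} dominates the shell $\|\cdot\|_K$-thickness $2\delta$, which forces the endpoint of the local curve to land in $(1-\delta)K$ where the segment can take over.

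Two preliminary facts are needed. First, $E$ must be shown to be connected, so that the conclusion ``$J$-John domain'' is well-posed. Any component $E'$ of $E$ disjoint from $(1-\delta)K$ would lie in the shell $(1+\delta)K\setminus (1-\delta)K$, and the plumpness Lemma~\ref{imporous} applied at a point of $\partial E'$ at a scale close to $r$ would produce a $\|\cdot\|_K$-ball of radius of order $c_0 r$ inside $E'$; a direct triangle-inequality computation (using $\eta_K\le n$) shows such a ball cannot fit in an annulus of radial thickness $2\delta$ unless $c_0 r$ is comparable to $\delta$, and the hypothesis $\delta\le\tfrac{c}{3n}r$ rules this out after possibly shrinking $c$ relative to $c_0$ (which Theorem~\ref{main thm} allows). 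Second, the geometry of the shell yields: (i) $\dist_K(0,\partial E)\ge 1-\delta$ since $\partial E\subset\{\|y\|_K\ge 1-\delta\}$, and (ii) for any $y\in E$ in the shell, the outward radial ray from $y$ exits $E$ within $\|\cdot\|_K$-distance $(1+\delta)-\|y\|_K\le 2\delta$, so $\dist_K(y,\partial E)\le 2\delta$.

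Given $x\in E$, the construction splits into two cases. If $x\in (1-\delta)K$, the segment from $x$ to $0$ lies in $(1-\delta)K\subset E$ by convexity, and parametrizing $z=(1-s)x$ one checks $\ell_{\|\cdot\|_{-K}}(\gamma[x,z])= s\|x\|_K$ against $\dist_K(z,\partial E)\ge (1-\delta)-\|z\|_K\ge s(1-\delta)$ to verify the John condition with constant $1$. If $x\in E\setminus (1-\delta)K$, let $y^*\in \partial E$ be the outward radial exit point from $x$ (so $\|x-y^*\|_K\le 2\delta\le cr$) and apply Theorem~\ref{main thm} at $y^*$ to obtain a curve $\gamma_1\subset E$ from $x$ to some $w_x$ satisfying $\dist_K(w_x,\partial E)\ge 2cr\ge 6n\delta$ and the John inequality with constant $J_0=J_0(n)$; since $6n\delta>2\delta$, observation (ii) forces $w_x\in (1-\delta)K$, and concatenating $\gamma_1$ with the segment from $w_x$ to $0$ (handled by the previous case) yields the desired curve.

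The principal obstacle is the connectedness step, where the interplay between the plumpness radius of Lemma~\ref{imporous} and the shell thickness is delicate and exactly determines the quantitative ratio $\delta/r\le c/(3n)$. Verifying the global John inequality along the composite curve from the second case is then routine: on $\gamma_1$ it is inherited from Theorem~\ref{main thm} (with $J_0=J_0(n)$ since the normalization $1\le M_K/m_K\le n$ absorbs the dependence on $K$), and on the tail segment the additive contribution $\ell_{\|\cdot\|_{-K}}(\gamma_1)\le J_0\bigl((1+\delta)-\|w_x\|_K\bigr)$ is bounded by a dimensional constant and is absorbed into the lower bound $\dist_K(z,\partial E)\ge (1-\delta)-\|z\|_K$ along the segment. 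Tracking constants then yields $J=J(n)$.
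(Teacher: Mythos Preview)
Your approach is essentially the same as the paper's: take $0$ as the John center, use radial segments inside the convex core, and for points in the shell apply Theorem~\ref{main thm} to reach a point deep enough that it must lie in the core, then concatenate. Two points deserve correction.

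First, the bound $\|x-y^*\|_K\le 2\delta$ is not quite right because $\|\cdot\|_K$ need not be symmetric. If $u=x/\|x\|_K\in\partial K$ and $y^*=\lambda^* u$ with $\lambda^*>\|x\|_K$, then $x-y^*=-(\lambda^*-\|x\|_K)u$, so $\|x-y^*\|_K=(\lambda^*-\|x\|_K)\,\|{-u}\|_K$, and $\|{-u}\|_K$ can be as large as $M_K/m_K\le n$. Thus one only gets $\|x-y^*\|_K\le 2n\delta$, which still satisfies $\le\frac{2c}{3}r<cr$ under the hypothesis $\delta\le \frac{c}{3n}r$, so the argument survives; the paper handles this by inserting an explicit $\eta_K$ factor.

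Second, the separate connectedness step via Lemma~\ref{imporous} is unnecessary and, as you yourself note, would require shrinking $c$ relative to $c_0$---but $c$ is fixed in the lemma's statement. The paper does not argue connectedness first. Your own Case~2 construction already yields connectedness: applying Theorem~\ref{main thm} to the \emph{component} containing $x$ produces $w_x$ with $\dist_K(w_x,\partial E)\ge 2cr>2\delta$, which (by your observation (ii) applied within that component) forces $w_x\in(1-\delta)K$; hence every component meets $(1-\delta)K$ and therefore contains it, so $E$ is connected. Finally, your verification on the tail segment is terse but recoverable: since $\dist_K(w_x,\partial E)\ge 2cr$ while the shell has $\|\cdot\|_K$-thickness $2\delta\le \frac{2c}{3n}r$, one gets $\dist_K(w_x,\partial E)\le C(n)\,\dist_K(b,\partial E)$ for every $b$ on the segment from $w_x$ to $0$, exactly as in the paper's final display.
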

\begin{rem}
Since $1\le \frac{M_K}{m_K}\le n$ by the additional assumption \eqref{mk control}, one has $c=c(n, \epsilon, r)>0$. Moreover, in the proof of Proposition~\ref{selection}  one chooses $r>0$ to be absolute, and then  $\delta$ depends only on $n$ eventually. 
\end{rem}

\begin{proof}[Proof of Lemma \ref{John minimizer}]
We show that every point $z\in E$ can be joined to the John center  $0$  of $E$,  which is also the origin, by a John curve $\gamma_z\subset E$ so that for any $a \in \gamma_z[z,0]$,  
\begin{equation}\label{John consequence}
    \ell(\gamma_z[z,\,a])\le J\dist(a,\,\partial E)
\end{equation}
holds for some $J=J(n, \epsilon, r)$. 

Towards this, for given $z\in E$,  let $y\in\partial K$ be the vector so that  $z=\lambda y$ for some $\lambda>0$.
Choose the outermost point $z_0\in \partial E$ which is in the same direction of $y$, i.e.\ $z_0:=\lambda_0 y$, and which is in the same component of $E$ as $z$. Then $\lambda_0>\lambda$. 

To achieve our proof, we discuss in two cases.

\noindent{\bf Case 1: $z\in (1-2\delta)K$.}
  Observe that $z_0$, $z$ and $0$ are on the same line. Thus, the convexity of $(1-2\delta)K$ and the assumption \eqref{close to ball} ensures that the line segment $L_z$ joining $z$ to $0$ satisfies $L_z\subset E$. Furthermore, recalling that  $z\in (1-2\delta)K$ and $y\in \partial K$ and that $z=\lambda y$ for some $\lambda>0$, for any $a\in L_z$,
  \begin{equation}\label{case1 est1}
      a=(|a|/|y|)y\quad \text{and} \quad |a|\le |z|< (1-2\delta )|y|.
  \end{equation}
Thus, via $y\in \partial K$, 
  the convexity of $(1-2\delta)K$ and the assumption \eqref{close to ball} also ensure that each point $a\in L_z$ satisfies 
  $$a+(1-2\delta-|a|/|y|)K\subset (1-2\delta)K\subset E.$$
  In particular, $\dist(a, \partial E) \ge (1-2\delta-|a|/|y|)\dist(0, \partial K)$.
Therefore, since \eqref{defi mk} gives
\begin{equation}\label{lem Kbd}
 \mathbb R^n\setminus  B_{\rho}\subset \partial K\subset B_{n\rho}\quad \text{for some }\rho\in(0,1)
\end{equation}
and since $y\in \partial K$,
we obtain from \eqref{case1 est1} that
\begin{align*}
    \dist(a, \partial E)&\ge (1-2\delta-|a|/|y|)\dist(0, \partial K)\ge ((1-2\delta)|y|-|a|)/n\nonumber\\
    & \ge (|z|-|a|)/n= \ell(L_z[z,a])/n.
\end{align*}
As a consequence, $L_z$ is the desired John curve with John constant $n$.


\noindent{\bf Case 2: $z\in E\setminus (1-2\delta)K$.} 
We aim to find a curve $\beta_z$ joining $z$ to a point  $z_1\in (1-2\delta)K$. Since we can join $z_1$ to the origin via a line segment according to Case 1, the desired John curve is obtained by concatenating these two curves.

To this end, recall that $z=\lambda y$ and $z_0=\lambda_0 y$. 
 \eqref{close to ball} implies $\lambda\in [1-2\delta,1+\delta)$ and  $\lambda_0\in (\lambda, 1+\delta)$ so that by \eqref{lem Kbd} and $y\in \partial K$, 
$$|z_0-z|= (\lambda_0-\lambda)|y|< 3\delta|y|\le 3n\delta.$$ 
Thus $z\in B_{3n\delta}(z_0)$ with $3n \delta\le J_0^{-1}cr$.

On the other hand, as Theorem \ref{main thm}, together with \eqref{mk control},  implies  $E$  is a $(J_0,\ c r)$-John domain for  $J_0=J_0(n, \epsilon, r)$, 
 we can  join $z$ to some point $z_1\in E\setminus B_{J_0cr}(z_0)$
by a curve $\beta_z\subset E$ satisfying
\begin{equation}\label{curve 1}
  \dist(z_1,\partial E)\ge J_0^{-1}cr \  \text{ and } \  \ell(\beta_z[z,\,a])\le J_0\dist(a,\,\partial \Omega) \quad  \text{for any } a\in \beta_z[z,\,z_1].
\end{equation}
Since $ J_0^{-1}cr\ge 3n\delta$, and $\partial E\subset (1+\delta) K\setminus (1-\delta)K,$ we conclude from \eqref{lem Kbd} and \eqref{curve 1} that
$z_1\in (1-2\delta)K$. Now by joining $z_1$ to the origin via a 
line segment $L_{z_1} \subset E$ via Case 1, we further get that
\begin{equation}\label{curve 2}
    \ell(L_{z_1}[z_1,\,a])\le n\dist(a,\,\partial E) \qquad  \text{for any } a\in L_{z_1}[z_1,\,0].
\end{equation}

Set $\gamma_z:=\beta_z\cup L_{z_1}$, which is a curve joining $z$ to $0$. Now we show that $\gamma_z$ is the desired John curve. 
By \eqref{curve 1}, it suffices to check points $b\in L_{z_1}.$


Since $b\in L_{z_1}$ with $L_{z_1}$ the segment joining $z_1$ to $0$, by the triangle inequality, the assumption that $\partial E\subset (1+\delta) K\setminus (1-\delta)K,$ together with the fact that $ \dist(z_1,\,\partial E) \ge  3n\delta$, 
\begin{align*}
    \frac 1 3 \dist(z_1,\,\partial E)\le  & \dist(z_1,\,\partial E)-2 n\delta \le  \dist(z_1,\,\partial((1-\delta)K)\\
    \le & \dist(b,\,\partial((1-\delta)K)) \le \dist(b,\,\partial E).
\end{align*}
 Therefore, by applying \eqref{curve 1} with $a=z_1$ and 
 \eqref{curve 2} with $a=b$, the construction of $\gamma_z$ tells 
\begin{align*} 
   \ell(\gamma_z[z,b])=  & \ell(\beta_z[z,\,z_1])+\ell(L_{z_1}[z_1,\,b])\\
    \le &J_0\dist(z_1,\,\partial E)+n\dist(b,\,\partial E)\le (3J_0+n) \dist(b,\,\partial E). 
\end{align*}
This implies \eqref{John consequence} when $b\in L_{z_1}$, and completes the proof. 
\end{proof}

\medskip

 \appendix

 \section{Proof of the Selection Principle.}

\begin{proof}[Proof of Proposition~\ref{selection}]
 Set
 $$A(E):=\min_{x\in\mathbb R^n}|E\Delta( x+K)|$$
    for any measurable set $E$, and observe that this minimum is attained  for some $x\in\mathbb R^n$ whenever $E$ is bounded.
Moreover, for any bounded measurable set $E, F\subset \mathbb R^n$, we have
\begin{equation}\label{triangle index}
    |A(E)-A(F)|\le |E\Delta F| 
\end{equation}
since, by assuming $A(E)\ge A(F)$ without loss of generality and observing that  
$$A(F)=|F\Delta (y+K)| \quad \text{ for some }\ y\in \mathbb R^n,$$
the triangle inequality gives
$$A(E)-A(F)\le |E\Delta (y+K)|-|F\Delta (y+K)|\le |E\Delta F|.$$

\medskip    
\noindent{\bf Step 1:}
    We first claim that, up to translations, $ K$ is the unique minimizer of the problem:
    \begin{equation}\label{isoperi pro 1}
        \min\{P_K(U)+\Lambda\big||U|-|K|\big|:U\subset\mathbb R^n\}\quad \text{ for $\Lambda>n$}.
    \end{equation}
    Indeed, by the Wulff inequality, we may replace $U$ in \eqref{isoperi pro 1} with some $(x+rK)$ satisfying $|(x+rK)|=|U|$. Then \eqref{isoperi pro 1} is equivalent to finding   where the minimum of 
    $$h(r):=nr^{n-1}+\Lambda|r^n-1|$$
    can be reached. Since $h$ has a unique minimum when $r=1$ in case $\Lambda >n$, our claim holds.

\medskip
    \noindent{\bf Step 2: Replace $E_k$ by a $(\Lambda+1, R_0)$-minimizer}. 
    Let
    \begin{equation}\label{epsi r}
        \Lambda=n+1\quad \text{and}\quad R_0=10.
    \end{equation}
    To set up the new set, for every $k\in\mathbb N^+$ we consider a minimizer $F_k'$ of the following problem:
    \begin{equation}\label{minimizer squ}
        \min\Big\{P_K(U)+\big|A(U)-A(E_k)\big|+\Lambda\big||U|-|K|\big|:U\subset  R_0 K\Big\},
    \end{equation}
where $\Lambda>n$ is fixed. Since the variational energy \eqref{minimizer squ} is invariant under translation, we may assume 
\begin{equation}\label{bary}
    \int_{F_k'} x=0
\end{equation}
up to a translation. In addition, up to extracting a subsequence,  the point $y_k\in\mathbb R^n$ satisfying 
\begin{equation}\label{ykfk}
    A(F_k')=|F_k'\Delta (y_k+K)|
\end{equation} 
converges to some point $y_0\in \mathbb R^n$. 
Moreover,
by \cite[Theorem 12.26]{M2012}, up to passing to a subsequence, there is a set $F$ satisfying 
\begin{equation}\label{construct minimizer}
    \int_F x=0,\quad \lim_{k\to +\infty}|F_k'\Delta F|= 0\quad \text{and}\quad P_K(F)\le \liminf_{k\to +\infty}P_K(F_k').
\end{equation}
As a result, recalling that $A(E_k)\to 0$ since  $E_k\to K$ as $k\to +\infty$, we use \eqref{minimizer squ}, \eqref{ykfk} and \eqref{construct minimizer} to obtain the minimality of $F$, that is, for any $U\subset  R_0 K$ with finite perimeter,
\begin{align}\label{minimal of F}
    & P_K(F)+A(F)+\Lambda\big||F|-|K|\big|\le P_K(F)+|F\Delta (y_0+K)|+\Lambda\big||F|-|K|\big|\nonumber\\
    &\le \liminf_{k\to+\infty}P_K(F_k')+|F_k'\Delta (y_k+K)|-A(E_k)+\Lambda\big||F_k'|-|K|\big|\nonumber\\
    &\le \liminf_{k\to+\infty}P_K(F_k')+\big|A(F_k')-A(E_k)\big|+\Lambda\big||F_k'|-|K|\big|\nonumber\\
    &\le \liminf_{k\to+\infty} P_K(U)+\big|A(U)-A(E_k)\big|+\Lambda\big||U|-|K|\big|\nonumber\\
    & = P_K(U)+A(U)+\Lambda\big||U|-|K|\big|.
\end{align}
Further recalling \eqref{isoperi pro 1} and that, up to a translation, $K$ is the unique solution of 
$$\min\{A(U):U\subset  R_0 K\},$$ 
 it implies that,  up to translations, $K$ is the unique solution of 
$$\min\{ P_K(U)+A(U)+\Lambda\big||U|-|K|\big|:U\subset  R_0 K\}.$$
As a result, from \eqref{minimal of F}, \eqref{construct minimizer} and \eqref{ykfk} it follows that  $F=K$ with $\int_{K} x =0$.

    Observe that $\partial F_k'$ converges to $\partial K$ in the Hausdorff metric since \eqref{construct minimizer} and $F_k'\subset R_0 K $. Next, we show that  $F_k'$ are all $(\Lambda+1, R_0)$-minimizers.  

    To this aim, we choose $x\in F_k'$ and $r\in(0, R_0)$  and a set $U$ satisfying
    $F_k'\Delta U\subset\subset (x+rK)$. Then we have two cases:

\medskip
\noindent{\bf Case 1: $U\subset R_0 K$.} 
By using \eqref{minimizer squ} and \eqref{triangle index} in sequence,  we have
\begin{align}\label{ezr minimizer}
    P_K(F_k')& \le P_K(U)+\big|A(U)-A(E_k)\big|-\big|A(F_k')-A(E_k)\big|+\Lambda\big||U|-|K|\big|-\Lambda\big||F_k'|-|K|\big|
    \nonumber\\
    & \le P_K(U)+|U\Delta F_k'|+\Lambda\big||U|-|F_k'|\big|\le P_K(U)+(1+\Lambda)|U\Delta F_k'|.
\end{align}

\medskip
\noindent{\bf Case 2: $|U\setminus  R_0 K|>0$.} 
Let $U':=U\cap R_0 K$. Since $F_k'\cup U'\subset R_0 K$ holds from the definition of $U'$ and \eqref{minimizer squ}, it leads to $|F_k'\setminus U'|=|F_k'\setminus U|$ so that 
$$|U'\Delta F_k'|= |U'\setminus F_k'|+|F_k'\setminus U'|\le |U\setminus F_k'|+|F_k'\setminus U|= |U\Delta F_k'|.$$
Consequently, again using $U'\subset R_0 K$,  we obtain from the consequence of case 1 that  
\begin{equation}\label{return case1}
        P_K(F_k')-P_K(U')\le  (\Lambda+1)|U'\Delta F_k'|\le (\Lambda+1)|U\Delta F_k'|.
\end{equation}
Moreover, the  Wulff inequality applied to $U\cup R_0 K$ tells that  there exists  $R>R_0$ with $|U\cup R_0 K|=|R K|$ so that 
\begin{equation}\label{iso estimate}
    P_K(R_0 K)< P_K(R K)\le P_K(U\cup R_0 K).
\end{equation}
This implies
$$P_K(U')-P_K(U)=P_K(R_0 K)-P_K(U\cup R_0 K)< 0,$$
 which, combined with \eqref{return case1}, yields that 
 \begin{align}\label{ezr minimizer 2}
     P_K(F_k')-P_K(U)& =\big(P_K(F_k')-P_K(U')\big)+\big(P_K(U')-P_K(U)\big)\le (\Lambda+1)|U\Delta F_k'|.
 \end{align}
 Thus, we conclude that $F_k'$ are $(\Lambda+1,R_0)$-minimizers.

\medskip
 \noindent{\bf Step 3:  $F_k'$ are John  whenever $k$ is sufficiently large. } 
  Up to passing to a subsequence, for the same  $\delta$ in Lemma \ref{John minimizer}, whenever $k$ is sufficiently large,
 \begin{equation}\label{boundary near K}
     \partial F_k'\subset (1+\delta) K\setminus (1-\delta)K
 \end{equation}
 holds. Hence, we claim that 
 \begin{equation}\label{nearly K}
     (1-\delta)K\subset F_k'\subset (1+\delta) K.
 \end{equation} 
 
 Indeed, the uniform boundedness of $F_k'$ and \eqref{boundary near K} ensure that $F_k' \subset (1+\delta) K$. We further suppose that there is a point $z\in (1-\delta)K$ with $z\notin F_k'$. Then any curve $\gamma\subset (1-\delta)K$ joining $z$ to $\partial (1-\delta)K$ satisfies $\gamma\cap \overline{F_k'}= \emptyset$ from \eqref{boundary near K} and the connectivity of $\gamma$. As a result,  $ (1-\delta) K\subset \mathbb R^n\setminus F_k'$ so that 
 $$0\gets |F_k'\Delta K|\ge |K\setminus F_k'|\ge |(1-\delta)K|$$
 which yields the contradiction. Hence, $(1-\delta)K\subset F_k'$ and then \eqref{nearly K}  holds. 

 As a result, combining \eqref{nearly K}, \eqref{epsi r} and Lemma \ref{John minimizer},  $F_k'$ are all $J$-John domains with $J=J(n)$.  

\medskip
 \noindent{\bf Step 4:  Properly scale $F_k'$.} 
 Recalling \eqref{assumption}, we have
 $$P_K(E_k)-P_K(K)\le \beta_k A(E_k)^2,$$ 
 where $\beta_k\in(0,\delta_k]$, 
since we have taken subsequence for $E_k$ and $F_k'$ multiple times.
 The minimality \eqref{minimizer squ} of $F_k'$, together with \eqref{assumption}, gives
 \begin{align}\label{measure to K}
     & P_K(F_k')+\Lambda\big||F_k'|-|K|\big|+\big|A(F_k')-A(E_k)\big| \le  P_K(E_k)\le P_K(K)+\beta_kA(E_k)^2 
 \end{align}
 and then the minimality \eqref{isoperi pro 1} of $K$ further provides
 \begin{equation}\label{K mini}
     P_K(K)+\beta_k A(E_k)^2\le P_K(F_k')+\Lambda\big||F_k'|-|K|\big|+\beta_kA(E_k)^2.
 \end{equation}
Hence, by comparing the left hand side of \eqref{measure to K} and the right hand side of \eqref{K mini},  we obtain that 
 $\big|A(F_k')-A(E_k)\big|\le \beta_k A(E_k)^2$. This, by gathering the assumption $E_k\Delta K\to 0$ 
 and $\beta_k\to 0$, leads to
 \begin{equation}\label{difference EF}
     \lim_{k\to+\infty}\frac{A(F_k')}{A(E_k)}=1.
 \end{equation}

To achieve the proof we need to scale $F_k'$ appropriately. Assume that $F_k=\lambda_k F_k'$, where $\lambda_k$ satisfies $|F_k|=|K|$. Observe that $\lambda_k\to 1$ holds by  $F_k'\to K$. Then via $$\lim_{k\to +\infty}P_K(F_k')=P_K(K) $$
given by  \eqref{measure to K}, we have $\lim_{k\to +\infty}P_K(F_k)=P_K(K) $. Hence, $P_K(F_k)/|F_k|<\Lambda$ holds whenever $k$ is sufficiently large, since $\lim_{k\to \infty}P_K(F_k)/|F_k|=P_K(K)/|K|=n<\Lambda$. 
Consequently, for $k$ large enough we have 
\begin{align}\label{estimate 3}
    |P_K(F_k)-P_K(F_k')|& =P_K(F_k)|1-\lambda_k^{1-n}|\le P_K(F_k)|1-\lambda_k^{-n}|\nonumber\\
    & <\Lambda|F_k| |1-\lambda_k^{-n}|= \Lambda \big||F_k|-|F_k'|\big|.
\end{align}
which, together with  \eqref{measure to K}, yields that 
\begin{equation}\label{estimate 4}
    P_K(F_k)\le P_K(F_k')
+\Lambda \big||F_k|-|F_k'|\big|= P_K(F_k')
+\Lambda \big||K|-|F_k'|\big|\le P_K(K)+\beta_k A(E_k)^2. 
\end{equation}

Now  we recall that \eqref{difference EF}
implies that $A(E_k)^2< 2A(F_k)^2 $  whenever $k$ is large enough, from which \eqref{estimate 4} gives \eqref{John selection} by setting $\alpha_k:=2\beta_k$. Further observe that  \eqref{bary} yields \eqref{barycond}. Hence,  we  conclude that $F_k$ are the desired $J$-John domains. 
\end{proof}

\end{document}